\newtheorem{theorem}{Theorem}[section]
\newtheorem*{thm}{Theorem}  
\newtheorem{lemma}[theorem]{Lemma}
\newtheorem{proposition}[theorem]{Proposition}
\theoremstyle{remark}
\newtheorem{remark}[theorem]{Remark}
\theoremstyle{definition}
	\newcommand{\GL}{\mathrm{GL} }
\title{A family of Threefolds of general type with canonical map of high degree}
\author[D. FRAPPORTI, C. GLEI\ss NER]{DAVIDE FRAPPORTI \and CHRISTIAN GLEI\ss NER}
\keywords{Threefolds  of general type, canonical map, finite group actions} 
\subjclass[2000]{14J30, 14L30, 14.01} 
\address{ \ \newline Davide Frapporti, Christian Glei\ss ner;\newline
 University of Bayreuth, Lehrstuhl Mathematik VIII; \newline
Universit\"atsstra\ss e 30, D-95447 Bayreuth, Germany}
\thanks{The authors thank S. Coughlan for inspiring 
conversations and a careful reading of the paper.
 A very special thanks  goes to R. Pignatelli 
 for the inspiring conversation at the Conference
``A Journey through Projective and Birational Geometry
Together with Marco Andreatta'' (Trento, January 2019),
where he encouraged  and helped the authors to expose	 the construction
 using an elementary approach.\\
The present work took place in the framework of the ERC Advanced grant n. 340258-TADMICAMT.\\
The first author is member of  G.N.S.A.G.A. of I.N.d.A.M.}
\date{\today}
\begin{document}

\begin{abstract} 
In this note we provide a two-dimensional family of smooth minimal threefolds of general type
with canonical map of degree 96, improving the previous known bound of 72.
\end{abstract}

\maketitle

\section*{Introduction} 
In this paper we consider smooth varieties of general type.\\
In the case of curves it is classically known that the canonical map is either an embedding or
a degree 2 map onto $\mathbb{P}^1$, the latter happens precisely when the curve is hyperelliptic.

In higher dimensions the situation is much less clear and 
it is  natural to ask: 
``Assuming that the canonical map is generically finite,
is its degree universally bounded? If so, 
what is the maximal possible value of the \textit{canonical degree}?''
In this paper by canonical degree we mean the degree of the canonical map.

For surfaces Beauville (\cite{Beau79}) showed that the canonical degree is at most 36, and  equality holds if and only if $p_g=3$, $q=0$, $K^2=36$ and the canonical system is base point free.

In the  threefold case, Hacon  \cite{Hacon_can}  established  576 as a bound for the canonical degree.
Later on in \cite{DG16_can} this bound was improved to 360, which can be achieved if and only if 
$p_g = 4$, $q_1= 2$, $\chi(\mathcal O) = -5$, $K^3 = 360$ and  the canonical system is base point free.
In  \cite{Hacon_can} Hacon also explained that if one allows terminal singularities, there is no bound for the canonical degree, presenting an infinite series of threefolds 
 with index 2 terminal singularities and arbitrarily  high canonical degree.

Beauville  and Hacon's proofs rely heavily on the  Miyaoka-Yau inequality, which cannot be used to control the canonical degree in higher dimensions.

For surfaces, as well as for Gorenstein minimal threefolds, the maximal value  can be achieved 
only  if the Miyaoka-Yau inequality becomes an equality, i.e.~for ball-quotients.
Unfortunately these are notoriously hard to handle and it  is not clear if  the bounds are sharp.
 In \cite{Yeung17} Yeung claims the existence of  a surface  realizing degree 36, but the proof seems to have a gap, as pointed out by the Mathscinet Reviewer 
 (MR Number: MR3673651).
More recently Rito \cite{Rito19} used the Borisov-Keum equations of a fake projective plane to costruct a surface having
 canonical map of degree 36.

Leaving ball-quotients aside, one can still try to construct examples having high canonical degree.
For surfaces, the highest canonical degrees obtained so far are: 16 due to Persson (\cite{Persson78}, 
double covering of a Campedelli surface),
24 due to Rito (\cite{Rito17_24}, abelian covering of a blow-up of $\mathbb P^2$),  and 32  due to  the
second author  together with Pignatelli and Rito (\cite{GPR18}, product-quotient surface).
In dimension 3 the situation   is less established.
The highest known values of the canonical degree are 32, 48, 64 and  72, which can be achieved  simply by taking the product $C\times S$, where  $C$  is a hyperelliptic curve and $S$ is one of the above mentioned surfaces.
Other examples with canonical degree 32 and 64 were  constructed in \cite{Cai08}.

In this paper we set a new record for canonical degree of a smooth  threefold:

\begin{thm}[Theorem \ref{MainThm}]
There exists a two dimensional family of smooth threefolds $X$ with 
canonical degree 96 and 
whose canonical image is a quadric.
\end{thm}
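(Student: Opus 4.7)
The plan is to realize $X$ as a smooth quotient of a product of three curves by a finite group acting diagonally and freely. Concretely, one looks for smooth projective curves $C_1, C_2, C_3$ of genus $\geq 2$, each carrying a faithful action of a common finite group $G$, such that the induced diagonal $G$-action on $C_1 \times C_2 \times C_3$ is fixed-point free. Setting $X := (C_1 \times C_2 \times C_3)/G$, freeness ensures that $X$ is smooth, and being étale-dominated by a product of curves of genus $\geq 2$ it is automatically minimal of general type. The advertised two-parameter family should then arise by deforming the $C_i$ in moduli while preserving the $G$-action.

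Since the quotient map is étale, the canonical sections on $X$ identify with the $G$-invariants
$$H^0(X, K_X) = \bigl(H^0(K_{C_1}) \otimes H^0(K_{C_2}) \otimes H^0(K_{C_3})\bigr)^G,$$
whose dimension can be computed via the Chevalley--Weil formula from the ramification data of each $C_i \to C_i/G$. The first task is to select $G$ and the three actions so that this invariant subspace has dimension exactly $5$, yielding a rational map $\phi_{K_X} \colon X \dashrightarrow \mathbb{P}^4$.

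To pin down the shape of the canonical image, I would take each $C_i$ to admit a $G$-equivariant degree-two morphism to $\mathbb{P}^1$ (so the hyperelliptic involution lies in $G$). The canonical map of the product then factors through a Segre--Veronese embedding of $\mathbb{P}^1\times\mathbb{P}^1\times\mathbb{P}^1$, with canonical degree $8$ onto its image. Passing to $G$-invariants corresponds to a linear projection, and one identifies the image of $\phi_{K_X}$ by exhibiting five $G$-invariant bihomogeneous forms on $(\mathbb{P}^1)^3$ and checking, by a direct invariant-theoretic computation, that they satisfy a single quadratic relation and no lower-degree one. The canonical degree then follows from
$$K_X^3 = \deg(\phi_{K_X}) \cdot \deg(\text{image}) = \frac{6(2g_1-2)(2g_2-2)(2g_3-2)}{|G|},$$
so that $|G|$ and the $g_i$ must be chosen to force $K_X^3 = 192$ and hence $\deg(\phi_{K_X}) = 96$.

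The principal obstacle is to satisfy simultaneously several tight constraints: freeness of the diagonal action on the product, an invariant subspace of canonical sections of dimension precisely $5$, and a single quadratic relation among them. I expect the most delicate point to be the explicit identification of the quadric; my approach is to reduce it to an invariant-theoretic calculation on $(\mathbb{P}^1)^3$, where the structure of the $G$-action on the Segre--Veronese coordinates makes the quadratic relation visible and at the same time rules out lower-degree relations.
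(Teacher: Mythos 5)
Your framework is the right one---the paper's example is exactly a threefold isogenous to a product, $X=(C_1\times C_2\times C_3)/G$ with $G$ acting freely and diagonally (though with a twist), and your numerical bookkeeping ($p_g=5$, $K_X^3=192$, quadric image forcing degree $96$) matches the paper's. But what you have written is a search strategy, not a proof: you never exhibit $G$, the curves, their genera, or the three actions, and the entire content of the theorem is the existence of such data satisfying all the constraints simultaneously. In the paper $G=(\mathbb Z_2)^4$, each $C_i$ is a genus-$5$ curve $x_2^2=x_0^2-x_1^2$, $x_3^2=x_0^2-a_ix_1^2$, $x_4^2=x_0^2-b_ix_1^2$ in $\mathbb P^4$ (a degree-$16$ cover of $\mathbb P^1$ branched in five points, \emph{not} hyperelliptic---it is a canonically embedded curve), and the action on the product is twisted by an order-$3$ matrix $A\in\GL(4,\mathbb Z_2)$ chosen so that the fifteen stabilizer generators $e_i, Ae_i, A^2e_i$ exhaust $(\mathbb Z_2)^4\setminus\{0\}$, which is what makes the diagonal action free. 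Your specific suggestion to put the hyperelliptic involution inside $G$ runs into exactly this freeness issue: a hyperelliptic involution always has $2g+2$ fixed points, so the element acting hyperelliptically on $C_1$ must act freely on $C_2\times C_3$, forcing a non-identical (twisted) identification of the three $G$-actions that your proposal does not construct.

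There is a second gap in the passage from one example to a \emph{two-dimensional family}. You propose to "deform the $C_i$ in moduli while preserving the $G$-action," but preserving the group action does not preserve the quadratic relation among the five canonical sections: the paper's construction depends on six parameters $a_1,b_1,a_2,b_2,a_3,b_3$, and Proposition \ref{MainProp} shows the relation $\sum\lambda_i s_i^2=0$ holds \emph{if and only if} $a_1=a_2=a_3$ and $b_1=b_2=b_3$, so the generic deformation has canonical degree $48$ (cubic image), not $96$. Identifying the locus where the quadric survives, and checking it is two-dimensional, is an explicit linear-algebra computation (the rank of an $8\times 5$ matrix in the $a_i,b_i$) that your invariant-theoretic outline does not reach.
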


The family lives in a 6-dimensional family of threefolds isogenous to a product.
Threefolds isogenous to a product are a special case of 
\textit{product-quotient} varieties  (see \cite{Cat00,FG16,Glei17}), i.e.  varieties 
birational to  the quotient of the product of  smooth projective curves
by the action of a finite group.

In recent years the intensive work on 
product-quotient varieties has produced several interesting examples,
e.g.  the  mentioned example of a surface of general type with canonical map of degree 32;
new topological types for surface of general type, in particular a family of surfaces of general type with $K^2=7$, $p_g=q=2$ (see \cite{CF18} and the references therein);
 and recently the first examples of rigid but not infinitesimally rigid compact complex manifolds (\cite{BP18}).  For other interesting  applications  see   \cite{Cat_degree}, \cite{FGP18},  \cite{GRR18}, \cite{LP16} and \cite{LP18}.

We point out that we originally found  this 2-dimensional family using the technology developed in 
the papers mentioned above, together with the
theory  of abelian coverings (see  \cite{Liedtke03,par91}).
However, we decided not to use the language of product-quotients here, 
but to give a simple and self-contained description using  explicit equations instead.

The paper is organized as follows: in Section \ref{construction} we construct the 6-dimensional family 
and in Section \ref{TCM} we prove the main Theorem.

 \section{The construction}\label{construction}

Let $C_{a,b}$ be the curve in  $\mathbb{P}^4$ defined by the equations
\begin{equation}\label{equationCab}
x_2^2=x_0^2-x_1^2\,, \quad x_3^2=x_0^2-ax_1^2\,, \quad x_4^2=x_0^2-bx_1^2\,,
\end{equation}
where  $a,b\in\mathbb C \setminus\{0,1\}$, $a\neq b$.
The curve $C_{a,b}$  is then  a smooth canonical curve  of genus 5.
We note that $C_{a,b}$ is  invariant under the   $(\mathbb Z_2)^4 $-action on $\mathbb P^4$ given by
\[e_i\colon  x_i \mapsto -x_i \qquad x_j\mapsto x_j  \  (i\neq j)  \] 
where $\{e_1,\ldots, e_4\}$ is the standard basis of $(\mathbb Z_2)^4 $
and  $e_0:=e_1+e_2+e_3+e_4$.

The map  $\pi: C_{a,b}\to \mathbb P^1$, $\pi(x_0:x_1:x_2:x_3:x_4)= (x_0^2: x_1^2)$ is a covering of the projective line branched in 5 points: 
\[p_0=[0:1]\,, \quad p_1=[1:0]\,, \quad p_2=[1:1]\,, \quad p_3=[a:1]\,, \ \text{ and } \ \  p_4=[b:1] \,.\]
Since this map has degree 16 and  is $(\mathbb Z_2)^4$-equivariant, it is the quotient map,
and the stabilizer of a ramification point in $\pi^{-1}(p_i)$ is the group of order 2 generated by $e_i$.

We consider now the threefold  $T:=C_{a_1,b_1} \times C_{a_2,b_2}\times C_{a_3,b_3}
\subset \mathbb P^4_{\textbf x} \times \mathbb P^4_{\textbf{y}} \times\mathbb P^4 _{\textbf z} $ 
with the ``twisted'' $(\mathbb Z_2)^4 $ action defined by 
\[e_i( \textbf x,\textbf y, \textbf z):= (e_i \cdot \textbf x, (Ae_i) \cdot \textbf y, (A^2e_i)\cdot \textbf z)\,,\]
where $\textbf x:=(x_0:x_1:x_2:x_3:x_4)$ (similarly for \textbf y and  \textbf z) and $A  \in \GL (4,\mathbb Z_2)$ is the matrix
\[A:= \left( 
\begin{array}{cccc}
0&1&0&1\\
0&1&1&1\\
1&1&1&0\\
1&0&1&0
\end{array}
\right)
\qquad \text{ and } \qquad 
A^2= \left( 
\begin{array}{cccc}
1&1&0&1\\
0&0&1&1\\
1&1&0&0\\
1&0&1&1
\end{array}
\right)\,.
\]

\begin{remark}\label{rmk11}
1) $A$  is chosen so that 
\[\{e_0,\ldots,e_4, Ae_0,\ldots, Ae_4, A^2e_0,\ldots, A^2e_4\}=(\mathbb Z_2)^4 \setminus \{0\}\,.\]
2) $A$ has order 3 and satisfies $I+A+A^2=0$.
\end{remark}

The rest of the paper is devoted to the study of the canonical map
of the quotient threefold 
\begin{equation}\label{DefX}
X:=(C_{a_1,b_1} \times C_{a_2,b_2}\times C_{a_3,b_3})/(\mathbb Z_2)^4\,.
\end{equation}

\begin{lemma}
 The threefold $X$ is smooth of general type, with ample canonical class $K_X$ and 
invariants
\[
\chi(\mathcal O_X)=-4 \,, \qquad 
K_X^3 = -48\chi(\mathcal O_X)= 192\,. 
\]
\end{lemma}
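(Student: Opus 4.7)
The strategy is to show that the $(\mathbb{Z}_2)^4$-action on $T$ is \emph{free}: then $\pi\colon T\to X$ is étale Galois of degree $16$, so $X$ is automatically smooth, and every numerical invariant of $X$ is the corresponding invariant of $T$ divided by $16$. For freeness, a nonzero $g\in(\mathbb{Z}_2)^4$ has a fixed point on $T$ iff $g$ stabilizes some point of $C_{a_1,b_1}$, $Ag$ some point of $C_{a_2,b_2}$, and $A^2g$ some point of $C_{a_3,b_3}$. From the description of the covering $\pi\colon C_{a,b}\to\mathbb{P}^1$ recalled above, the nontrivial point-stabilizers on each $C_{a_i,b_i}$ are generated exactly by the elements of $E:=\{e_0,\ldots,e_4\}$. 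Hence such a $g$ would satisfy $g\in E\cap A^{-1}E\cap A^{-2}E$. By Remark~\ref{rmk11}(1) the sets $E,\,AE,\,A^2E$ partition $(\mathbb{Z}_2)^4\setminus\{0\}$, and using $A^3=I$ from Remark~\ref{rmk11}(2) this is the same as the partition $E\sqcup A^{-2}E\sqcup A^{-1}E$. So the three stabilizer conditions are pairwise incompatible and no nonzero $g$ qualifies.

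\emph{Invariants.} Since $\pi$ is étale of degree $16$ one has $K_T=\pi^*K_X$ and $\chi(\mathcal O_T)=16\,\chi(\mathcal O_X)$. As $C_{a,b}$ has genus $5$, Künneth gives $\chi(\mathcal O_T)=(1-5)^3=-64$, whence $\chi(\mathcal O_X)=-4$. Writing $K_T=\sum_i p_i^*K_{C_{a_i,b_i}}$ and cubing on the threefold $T$, all terms containing a repeated factor vanish (since $(p_i^*K_{C_{a_i,b_i}})^2=0$ for dimension reasons), so only the triple mixed product survives with multiplicity $3!=6$. Thus $K_T^3=6\cdot 8^3=3072$ and $K_X^3=192$, which indeed equals $-48\,\chi(\mathcal O_X)$.

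\emph{Ampleness and general type.} Each $K_{C_{a_i,b_i}}$ is ample of degree $8$ on a smooth curve, so the exterior sum $K_T$ is ample on $T$; since $\pi$ is finite and $\pi^*K_X=K_T$ is ample, $K_X$ is ample by Nakai--Moishezon. In particular $X$ is of general type.

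\emph{Main obstacle.} The heart of the argument is the freeness step: one must recognize that the matrix $A$ has been chosen precisely so that the three ``ramification patterns'' on the three factors become pairwise disjoint inside $(\mathbb{Z}_2)^4$, which is exactly the content of Remark~\ref{rmk11}. Once freeness is in place, everything else is a standard Künneth-plus-étale-descent computation.
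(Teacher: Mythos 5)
Your proposal is correct and follows essentially the same route as the paper: freeness of the action via the disjointness of $E=\{e_0,\dots,e_4\}$, $AE$, $A^2E$ from Remark \ref{rmk11} (the paper only invokes $AE\cap E=\emptyset$, which already suffices), followed by the standard \'etale-descent computation of $\chi(\mathcal O_X)$ and $K_X^3$ and descent of ampleness. The extra details you supply (Nakai--Moishezon, the vanishing of repeated factors in $K_T^3$) are fine and merely make explicit what the paper leaves implicit.
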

\begin{proof}
The $e_i$ are the unique non-trivial elements of $(\mathbb Z_2)^4$ having fixed points on the curve.
According to  Remark \ref{rmk11}, $Ae_j \neq e_k $ for all $j,k$, therefore 
 the $(\mathbb Z_2)^4$-action on the product $T$ is free and $X$  is smooth.

Since $T$ is of general type, with ample canonical class and
the action is free, the quotient $X$  is also  of general type with ample canonical class. 
Moreover, by the freeness of the action it follows
\begin{eqnarray*}
16\chi(\mathcal O_X)=  \chi(\mathcal O_T)= \prod_{i=1}^3 (1-g(C_{a_i,b_i}))= -4^3 \\
16K_X^3=  K^3_T= 6\prod_{i=1}^3 (2g(C_{a_i,b_i})-2)= -48\chi(\mathcal{ O}_T) 
\end{eqnarray*} 
\end{proof}

\begin{remark}
Since $C_{a_i,b_i}$ is a canonical curve, the restriction 
 $H^0(\mathbb{P}^4, \mathcal O (1)) \longrightarrow H^0(C_{a_i,b_i},K_{C_{a_i,b_i}})$
 is an isomorphism. Therefore
 \begin{eqnarray*}
 H^0(C_{a_1,b_1}, K_{C_{a_1,b_1}} )&=& \langle x_0,\ x_1, \ x_2, \ x_3, \ x_4\rangle\,,\\
  H^0(C_{a_2,b_2}, K_{C_{a_2,b_3}} )&=& \langle y_0,\ y_1, \ y_2, \ y_3, \ y_4\rangle\,,\\
   H^0(C_{a_3,b_3}, K_{C_{a_2,b_3}} )&=& \langle z_0,\ z_1, \ z_2, \ z_3, \ z_4\rangle\,.\\
 \end{eqnarray*}
\end{remark}
\vspace{-.5cm}

\begin{lemma}\label{Hodge_num}
The canonical system of $X$ is spanned by 
\[s_i:=x_iy_iz_i\,, \qquad i=0,\ldots, 4\,.\]
In particular $p_g(X)=5$.
\end{lemma}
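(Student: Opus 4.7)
The plan is to exploit the freeness of the $G=(\mathbb Z_2)^4$-action on
$T=C_{a_1,b_1}\times C_{a_2,b_2}\times C_{a_3,b_3}$. Since $T\to X$ is then \'etale, $K_T=\pi^\ast K_X$ and
$H^0(X,K_X)=H^0(T,K_T)^G$. By K\"unneth and the identifications from the preceding Remark, $H^0(T,K_T)$ has as a basis the $125$ monomials $x_iy_jz_k$ with $0\le i,j,k\le 4$; each such monomial is a $G$-eigenvector, so $H^0(X,K_X)$ is spanned by the individual monomials that are fixed by every $e\in G$, and the lemma amounts to saying these are exactly the five diagonal ones $s_i=x_iy_iz_i$.

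To decide which monomials are fixed, I would first pin down the character $\sigma_i\in\widehat G$ by which each coordinate $x_i$ transforms in $H^0(C_{a,b},K_{C_{a,b}})$. The cleanest route is via the holomorphic Lefschetz fixed-point formula: each $e_m$ has $8$ fixed points on $C_{a,b}$ with local action $-1$, giving $\mathrm{tr}(e_m\mid H^0(C,K_C))=1-8/2=-3$; since the representation is a sum of five $\pm1$-characters of $G$, this forces exactly one $+1$-eigenvalue for $e_m$. Combined with the relation $e_0=e_1+e_2+e_3+e_4$, this singles out $\sigma_0,\dots,\sigma_4$ as the five distinct characters determined by $\sigma_i(e_m)=+1$ iff $m=i$, with $x_i$ transforming by $\sigma_i$. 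With this in hand, invariance of $s_i$ under the twisted action is immediate from Remark \ref{rmk11}(2):
\[
e\cdot s_i=\sigma_i(e)\,\sigma_i(Ae)\,\sigma_i(A^2e)\,s_i=\sigma_i\!\bigl((I+A+A^2)e\bigr)\,s_i=\sigma_i(0)\,s_i=s_i,
\]
so $s_0,\dots,s_4$ span a $5$-dimensional subspace of $H^0(X,K_X)$ and $p_g(X)\ge 5$.

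For the reverse inequality I would rule out any other invariant monomial. The condition on $x_iy_jz_k$ is $\sigma_i+A^T\sigma_j+(A^T)^2\sigma_k=0$ in $\widehat G\cong(\mathbb Z_2)^4$. Using the explicit matrices $A$ and $A^2$ given in the text, one computes the five vectors $A^T\sigma_j$ and the five vectors $(A^T)^2\sigma_k$ and enumerates the $25$ pairs $(j,k)$; a direct inspection shows that $A^T\sigma_j+(A^T)^2\sigma_k$ lands in the set $\{\sigma_0,\dots,\sigma_4\}$ only when $j=k$, in which case the sum equals $\sigma_j$ and forces $i=j=k$. The delicate point of the whole argument is the representation-identification step: the projective $G$-action on $\mathbb P^4$ admits no naive lift to a $(\mathbb Z_2)^4$-action on coordinate functions (a naive lift only closes up to $(\mathbb Z_2)^5$), so the correct $G$-linearization on $K_{C_{a,b}}$ has to be singled out intrinsically as above. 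Once that subtlety is resolved, the identity $I+A+A^2=0$ from Remark \ref{rmk11} produces the five diagonal invariants essentially for free, and the remaining verification is pure bookkeeping on $\widehat G$.
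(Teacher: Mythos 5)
Your argument is correct and its skeleton is the paper's: freeness gives $H^0(X,K_X)=H^0(T,K_T)^{(\mathbb Z_2)^4}$, K\"unneth gives the monomial basis $x_iy_jz_k$, and the identity $I+A+A^2=0$ of Remark \ref{rmk11} produces the five diagonal invariants; the ``only if'' direction is in both cases a finite check on characters. Where you genuinely diverge is in the linearization step. The paper simply acts on the coordinates in the obvious way, so that $x_i$ transforms by the character $\tau_i$ with $\tau_i(e_m)=-1$ iff $m=i$ (and $\tau_0$ trivial); you instead compute the true pullback action on $H^0(C_{a,b},K_{C_{a,b}})$ by holomorphic Lefschetz and obtain $\sigma_i=\tau_i\cdot\rho$ with $\rho(e_m)=-1$ for $m=1,\dots,4$, i.e. $\sigma_i(e_m)=+1$ iff $m=i$. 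Your characters are the correct ones (each $e_m$ acts on the differentials with trace $-3$, not $+3$), so you are repairing a point the paper glosses over; but the two bookkeepings give the same invariance condition because the discrepancy contributes $\rho(e)\,\rho(Ae)\,\rho(A^2e)=\rho\bigl((I+A+A^2)e\bigr)=1$ to every monomial $x_iy_jz_k$ --- the same identity that makes the $s_i$ invariant also makes the choice of lift irrelevant here, which is why the paper's shortcut is harmless. The price of your extra rigor is the Lefschetz machinery; what it buys is a proof that does not depend on an unjustified choice of linearization. One optional improvement: your final ``direct inspection of the $25$ pairs'' can be avoided entirely by counting invariants with characters --- for every $e\neq 0$ exactly one of $e$, $Ae$, $A^2e$ lies in $\{e_0,\dots,e_4\}$ by Remark \ref{rmk11}, so $\dim H^0(K_T)^G=\tfrac1{16}\bigl(125+15\cdot(-3)\bigr)=5$, and the five invariants you already exhibited must span.
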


\begin{proof}
By K\"unneth's formula we have the following isomorphism
\[
H^0(K_T)\cong
H^{0}(K_{C_{a_1,b_1}} ) \otimes H^0(K_{C_{a_2,b_2}}) \otimes H^0(K_{C_{a_3,b_3}})=
\langle x_i y_jz_k\rangle_{i,j,k}.
\]

The $(\mathbb Z_2)^4$-action  on the product induces an  action  on $H^0(K_T)$ via pull-back:
\[e_\alpha^*(x_iy_jz_k) = e_\alpha^*(x_i) \cdot (Ae_\alpha)^*(y_j) \cdot (A^2e_\alpha)^*(z_k) =(-1)^{n_\alpha(i,j,k)}   x_i y_jz_k\,,\]
where   $n_\alpha(i,j,k)=I_{i\alpha}+ A_{j\alpha}+A^2_{k\alpha}$ for $\alpha=1,2,3,4$;
and  $I_{0\alpha}:=0$, $A_{0\alpha}:=0$, $A^2_{0\alpha}:=0$  for all $\alpha$. 
By the freeness of the action it holds
\[H^0(X, K_X) =H^0(T,K_T)^{(\mathbb Z_2)^4}= \langle x_iy_jz_k \mid n_\alpha(i,j,k)=0\,, \forall \alpha
\rangle\,,\]
and we conclude
\[ H^0(X, K_X) = \langle x_0y_0z_0, \ x_1y_1z_1,\ x_2y_2z_2,\ x_3y_3z_3,\ x_4y_4z_4 \rangle\,,\]
since 
$I_{i\alpha}+ A_{j\alpha}+A^2_{k\alpha}=0$ for all $\alpha$ if and only if
$i=j=k$ (cf.~Remark \ref{rmk11}).
\end{proof}
\begin{remark}
In a similar way one can determine all the other Hodge numbers of $X$:
\[q_2(X)=0\,, \quad q_1(X)=0\,, \quad h^{1,1}(X)=3\,, \quad h^{2,1}(X)=15\,.\]
\end{remark}

\section{The Canonical Map}\label{TCM}

\begin{theorem}
The canonical system $|K_X|$ is base point free, and the canonical image is a hypersurface in $\mathbb P^4$.
\end{theorem}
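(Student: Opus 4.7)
The plan is to reduce both assertions to the \'etale $(\mathbb{Z}_2)^4$-cover $\pi\colon T \to X$ with $T := C_{a_1,b_1} \times C_{a_2,b_2} \times C_{a_3,b_3}$, and to work with the $(\mathbb{Z}_2)^4$-invariant sections $s_i = x_i y_i z_i$ that span $H^0(X,K_X)$ by Lemma~\ref{Hodge_num}. Since $\pi$ is \'etale, base-point freeness and the dimension of the canonical image of $X$ are both detected on $T$ through the spanning set $\{s_0,\ldots,s_4\}$.

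For base-point freeness, a point $(P,Q,R) \in T$ is a base point exactly when, for every index $i \in \{0,\ldots,4\}$, at least one of $x_i(P)$, $y_i(Q)$, $z_i(R)$ vanishes. A short case analysis on the equations~(\ref{equationCab}) shows that at most one of the coordinates $x_0,\ldots,x_4$ can vanish at a given point of $C_{a,b}$: for instance $x_0 = x_2 = 0$ would force $x_1 = 0$ via $x_2^2 = x_0^2 - x_1^2$ and then $x_3 = x_4 = 0$, contradicting $[0{:}0{:}0{:}0{:}0] \notin \mathbb{P}^4$, and the remaining pairs are excluded in the same way. Consequently the three ``vanishing index sets'' $\{i : x_i(P)=0\}$, $\{i : y_i(Q)=0\}$, $\{i : z_i(R)=0\}$ each have cardinality at most one, so their union misses at least two indices of $\{0,\ldots,4\}$ and at least two of the $s_j$'s do not vanish at $(P,Q,R)$.

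For the hypersurface claim it suffices to prove generic finiteness of $\phi\colon X \to \mathbb{P}^4$. Working in the chart $s_0 \neq 0$ and setting $u := (x_1/x_0)^2$, $v := (y_1/y_0)^2$, $w := (z_1/z_0)^2$, the equations~(\ref{equationCab}) express the squared affine coordinates $\sigma_\alpha := (s_\alpha/s_0)^2$ as
\[
\sigma_1 = uvw, \quad \sigma_2 = (1-u)(1-v)(1-w),
\]
\[
\sigma_3 = (1-a_1 u)(1-a_2 v)(1-a_3 w), \quad \sigma_4 = (1-b_1 u)(1-b_2 v)(1-b_3 w).
\]
The projection $T \to \mathbb{A}^3$, $(P,Q,R) \mapsto (u,v,w)$, is dominant with generic fibres of size $16^3$, and the coordinate-wise squaring $\mathbb{A}^4 \to \mathbb{A}^4$ is a finite map; hence $\phi$ is generically finite if and only if the polynomial map $\Phi\colon \mathbb{A}^3 \to \mathbb{A}^4$, $(u,v,w) \mapsto (\sigma_1,\ldots,\sigma_4)$, has three-dimensional image.

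The remaining step is a Jacobian computation: at $(u,v,w) = (0,0,0)$ the Jacobian of $(\sigma_2,\sigma_3,\sigma_4)$ with respect to $(u,v,w)$ equals
\[
-\begin{pmatrix} 1 & 1 & 1 \\ a_1 & a_2 & a_3 \\ b_1 & b_2 & b_3 \end{pmatrix},
\]
so $\Phi$ is generically finite as soon as this determinant does not vanish, which cuts out a Zariski-open subset of the parameter space. The main obstacle is to extend generic finiteness to the full admissible locus of $(a_i,b_i)$: when the above matrix degenerates one has to redo the computation at a different point -- for example $(u,v,w) = (1/a_1, 0, 0)$, where the analogous $3\times 3$ minor specialises to a non-zero multiple of $(1-a_1)(a_1-b_1)(b_2-b_3)$ -- and combine the resulting non-degeneracy conditions over finitely many test points to conclude that the canonical image is a hypersurface of $\mathbb{P}^4$ throughout the family.
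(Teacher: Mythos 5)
Your treatment of base-point freeness coincides with the paper's: the equations \eqref{equationCab} force at most one homogeneous coordinate to vanish at any point of $C_{a,b}$, so the three vanishing index sets cannot cover $\{0,\dots,4\}$ and the sections $s_i$ have no common zero. That part is complete.

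For the hypersurface claim, however, your route has a genuine gap. Your Jacobian test at $(u,v,w)=(0,0,0)$ certifies generic finiteness of $\Phi$ only when
\[
\det\begin{pmatrix} 1 & 1 & 1 \\ a_1 & a_2 & a_3 \\ b_1 & b_2 & b_3 \end{pmatrix}\neq 0,
\]
and this determinant vanishes identically on the locus $a_1=a_2=a_3$, $b_1=b_2=b_3$, where the matrix has rank one --- which is exactly the two-dimensional subfamily of Theorem~\ref{MainThm} that the paper is ultimately about. You acknowledge that one must then redo the computation at other test points and ``combine the resulting non-degeneracy conditions over finitely many test points to conclude,'' but you never carry this out; so the argument is incomplete precisely where it matters most, and even granting the second test point you propose, you have not verified that your finitely many minors have no common zero on the whole admissible parameter locus. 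The paper sidesteps all of this with a one-line argument: the first Lemma of Section \ref{construction} already establishes that $K_X$ is ample (ampleness of $K_T$ descends along the free quotient), so if the base-point-free canonical morphism had image of dimension at most $2$, some fibre would contain a curve $C$ with $K_X\cdot C=0$, contradicting ampleness. Since $p_g(X)=5$, the three-dimensional image in $\mathbb P^4$ is a hypersurface. I recommend replacing the Jacobian analysis by this ampleness argument; if you want to keep the coordinate approach, you must exhibit, for \emph{every} admissible $(a_i,b_i)$, a point at which the Jacobian of $\Phi$ has rank $3$.
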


\begin{proof}
It follows from \eqref{equationCab} and the conditions on the parameters $a_k$ and $b_k$ that $x_i$, $x_j$ (resp. $y_i$, $y_j$ and $z_i$, $z_j$)   with $i\neq j $ cannot vanish simultaneously on $C_{a_1,b_1}$ (resp. $C_{a_2,b_2}$ and $C_{a_3,b_3}$).
Hence the sections $s_i=x_iy_iz_i$ have no common zeros and
 the canonical system $|K_X|$ is base point free.

Since $K_X$ is ample, the image of the canonical map  is a threefold, otherwise there would exist a curve $C\subset X$ with $K_X. C=0$.
\end{proof}

Note that the construction of $X$ depends on 6 parameters $a_1,b_1, a_2,b_2,a_3,b_3$.
We now  show that there is a 2-dimensional subfamily
whose elements have canonical degree 96.
Since
\[(\deg \varphi_{K_X}) \cdot \deg(\varphi_{K_X}(X))= K_X^3= 192= 2\cdot 96\,,\]
and the $\varphi_{K_X}(X)$ is non-degenerate,
we observe that $96$ is the maximal possible canonical degree.
This maximum is  achieved if and only if the image is a quadric in $\mathbb P^4$, i.e. the sections $s_i$ satisfy a  quadratic relation.

\begin{proposition}\label{MainProp}

The sections $s_i$ satisfy a non-trivial quadratic relation 
of the form
\begin{equation}\label{quadric}
 \lambda_0 s_0^2 + \lambda_1 s_1^2 + \lambda_2 s_2^2 + \lambda_3 s_3^2 + \lambda_4 s_4^2 =0
 \end{equation}
if and only if $a_1=a_2=a_3$ and $b_1=b_2=b_3$.
\end{proposition}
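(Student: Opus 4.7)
The plan is to expand each $s_k^2$ in an explicit eight-dimensional space of sections on $T$, translate \eqref{quadric} into a linear system in $(\lambda_0, \ldots, \lambda_4)$, reduce to a collinearity condition in $\mathbb{A}^2$, and finally show that the arithmetic constraints on the parameters force all three pairs $(a_i, b_i)$ to coincide.

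First, using \eqref{equationCab}, one has $s_0^2 = x_0^2 y_0^2 z_0^2$, $s_1^2 = x_1^2 y_1^2 z_1^2$, and for $k = 2, 3, 4$,
\[
s_k^2 = \bigl(x_0^2 - \alpha_k^{(1)} x_1^2\bigr)\bigl(y_0^2 - \alpha_k^{(2)} y_1^2\bigr)\bigl(z_0^2 - \alpha_k^{(3)} z_1^2\bigr),
\]
with $(\alpha_2^{(i)}, \alpha_3^{(i)}, \alpha_4^{(i)}) = (1, a_i, b_i)$. Thus all five $s_k^2$ lie in the span of the eight monomials
\[
m_\epsilon := x_0^{2(1-\epsilon_1)} x_1^{2\epsilon_1} y_0^{2(1-\epsilon_2)} y_1^{2\epsilon_2} z_0^{2(1-\epsilon_3)} z_1^{2\epsilon_3}, \qquad \epsilon \in \{0,1\}^3.
\]
Since $x_0^2$ and $x_1^2$ are linearly independent sections on each factor curve ($\pi$ is non-constant onto $\mathbb{P}^1$), K\"unneth implies the $m_\epsilon$ are linearly independent on $T$. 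Hence \eqref{quadric} is equivalent to eight linear conditions on $(\lambda_0, \ldots, \lambda_4)$: those at $\epsilon = (0,0,0)$ and $\epsilon = (1,1,1)$ merely determine $\lambda_0$ and $\lambda_1$, while the remaining six (for $|\epsilon| \in \{1, 2\}$) read
\[
\lambda_2 + a_j \lambda_3 + b_j \lambda_4 = 0 \ \ (j = 1, 2, 3), \qquad \lambda_2 + a_j a_k \lambda_3 + b_j b_k \lambda_4 = 0 \ \ (1 \le j < k \le 3).
\]
A non-trivial relation therefore exists if and only if the six points $(a_j, b_j)$ and $(a_j a_k, b_j b_k)$ of $\mathbb{A}^2$ all lie on one affine line $L$.

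The main step---and the principal obstacle---is to deduce from this collinearity that $a_1 = a_2 = a_3$ and $b_1 = b_2 = b_3$. Vertical lines $a = c$ and horizontal lines $b = q$ are immediately ruled out by $c^2 = c$ and $q^2 = q$ respectively, both incompatible with $c, q \in \mathbb{C} \setminus \{0, 1\}$. For a line $L: b = p a + q$ with $p \neq 0$, substituting $b_j = p a_j + q$ into $b_j b_k = p a_j a_k + q$ yields
\[
p(p-1)\, a_j a_k + p q\, (a_j + a_k) + q(q-1) = 0
\]
for each pair $\{j, k\}$. Subtracting the $(1,2)$- and $(1,3)$-instances factors as $p (a_2 - a_3)\bigl((p-1) a_1 + q\bigr) = 0$; the second factor is non-zero, since $(p-1) a_1 + q = 0$ would give $b_1 = p a_1 + q = a_1$, contradicting $a_1 \neq b_1$. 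Hence $a_2 = a_3$, and by symmetry $a_1 = a_2$, after which $b_j = p a_j + q$ forces $b_1 = b_2 = b_3$.

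Conversely, when $a_1 = a_2 = a_3 = a$ and $b_1 = b_2 = b_3 = b$ the six linear conditions collapse to the two equations $\lambda_2 + a \lambda_3 + b \lambda_4 = 0$ and $\lambda_2 + a^2 \lambda_3 + b^2 \lambda_4 = 0$, which are independent because $a \neq a^2$; their one-dimensional common kernel produces a non-trivial $(\lambda_0, \ldots, \lambda_4)$ satisfying \eqref{quadric}.
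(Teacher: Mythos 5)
Your proof is correct. The starting point coincides with the paper's: both expand each $s_k^2$ via \eqref{equationCab} in the eight monomials $x_0^{2(1-\epsilon_1)}x_1^{2\epsilon_1}\cdots$ and arrive at the same $8\times 5$ homogeneous linear system in $(\lambda_0,\ldots,\lambda_4)$ (your six conditions for $|\epsilon|\in\{1,2\}$ are exactly rows $3$--$8$ of the paper's matrix, and you are in fact more careful than the paper in justifying, via K\"unneth, that vanishing of the section is equivalent to vanishing of all eight coefficients). Where you genuinely diverge is in solving that system. The paper performs Gaussian elimination on the full matrix, extracts the four polynomial equations \eqref{4eq}, and disposes of them by a two-case algebraic analysis ($b_1=b_2$ versus $b_1\neq b_2$), the second case ending in a somewhat delicate chain of substitutions. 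You instead observe that $\lambda_0,\lambda_1$ are determined by the $\epsilon=(0,0,0)$ and $(1,1,1)$ equations, so nontriviality reduces to the $6\times 3$ subsystem, i.e.\ to the collinearity of the six points $(a_j,b_j)$, $(a_ja_k,b_jb_k)$ in $\mathbb{A}^2$; you then classify the possible lines, ruling out horizontal and vertical ones by $c^2=c$ and handling $b=pa+q$ by the clean factorization $p(a_2-a_3)\bigl((p-1)a_1+q\bigr)=0$, where the second factor is killed precisely by the hypothesis $a_1\neq b_1$. This geometric reformulation buys a shorter and more transparent case analysis that makes visible exactly which hypotheses ($a_i,b_i\notin\{0,1\}$, $a_i\neq b_i$) are used where; the paper's matrix reduction is more mechanical but keeps everything in a form directly checkable by hand or by computer algebra. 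Your converse direction (two independent rows $(1,a,b)$, $(1,a^2,b^2)$ with a one-dimensional kernel) is also complete.
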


\begin{proof}

Using the equations of the curves \eqref{equationCab}, the existence of a non-trivial quadratic relation as in \eqref{quadric}  is equivalent to vanishing of the following expression
\[\begin{array}{l} \lambda_0 x_0^2y_0^2z_0^2+
\lambda_1   x_1^2y_1^2z_1^2+
 \lambda_2 (x_0^2-x_1^2)(y_0^2-y_1^2)(z_0^2-z_1^2)+\\
  \lambda_3  (x_0^2-a_1x_1^2)(y_0^2-a_2y_1^2)(z_0^2-a_3z_1^2)+
   \lambda_4 (x_0^2-b_1x_1^2)(y_0^2-b_2y_1^2)(z_0^2-b_3z_1^2)
   \end{array}
\]
for some $0\neq (\lambda_0, \ldots, \lambda_4)\in \mathbb C^5$.

 In other words, there exists a non-trivial quadratic relation as in \eqref{quadric} if and only if the following system of linear  equations has a non-trivial solution:

 {\small
\[\left(
\begin{array}{ccccc}
   1& 0& 1& 1&1\\
   0&1&  -1& -a_1a_2a_3 & -b_1b_2b_3 \\
 0 &  0& 1&  a_1 & b_1\\
 0 &  0& 1&  a_2 &b_2\\
 0 &  0& 1&  a_3 & b_3\\
0 &  0& 1&  a_1a_2 & b_1b_2\\
 0 &  0& 1&  a_2a_3& b_2b_3\\
 0 &  0& 1&  a_1a_3& b_1b_3\\
   \end{array}
    \right)\cdot 
    \left(
\begin{array}{c}
    \lambda_0\\  \lambda_1\\  \lambda_2\\
    \lambda_3\\   \lambda_4\\
       \end{array}
    \right)=0\,.
\]}

This means that the matrix has rank at most 4.
Using elementary matrix transformations under using the conditions $a_i,b_i \notin \{0,1\}$ and $a_i\neq b_i$, the matrix becomes

{\small
\[\left(
\begin{array}{ccccc}
   1& 0& 1& 1&1\\
   0&1&  -1& -a_1a_2a_3 & -b_1b_2b_3 \\
 0 &  0& 1&  a_1 & b_1\\
0 &  0& 0&  1 &\dfrac{ b_1(b_2-1)}{a_1(a_2-1)}\\
 0 &  0& 0&  0&(b_3-1)(a_2-1)-(a_3-1) (b_2-1)\\
  0 &  0& 0&  0 &(b_2-b_1)a_1(a_2-1) -(a_2-a_1) b_1(b_2-1)\\
 0 &  0& 0&  0& (b_3-b_1)a_1(a_2-1)- (a_3-a_1 ) b_1(b_2-1)\\
 0 &  0& 0& 0 & (b_2b_3-b_1)a_1(a_2-1)-(a_2a_3-a_1) b_1(b_2-1)\\
   \end{array}
    \right)
\]}

which has rank 4 if and only if the following equations hold:

{\small
\begin{equation}\label{4eq}
\left\{\begin{array}{rl}
i)&(b_3-1)(a_2-1)=(a_3-1) (b_2-1)\\
ii)&(b_2-b_1)a_1(a_2-1)=(a_2-a_1) b_1(b_2-1)\\
iii)&(b_3-b_1)a_1(a_2-1)=(a_3-a_1 ) b_1(b_2-1)\\
iv)& (b_2b_3-b_1)a_1(a_2-1)=(a_2a_3-a_1) b_1(b_2-1)\\
\end{array}
\right.\end{equation}
}

We claim that \eqref{4eq}  is  equivalent to
\begin{equation}\label{2eq}
a_1=a_2=a_3 \qquad \makebox{and} \qquad b_1=b_2=b_3\,,
\end{equation}
 under  the conditions on the $a_i$'s and $b_i$'s.
Clearly, it suffices to prove that a solution of \eqref{4eq} is also a solution of \eqref{2eq}.
 We distinguish two cases:

\underline{Case $b_1=b_2$.}
\noindent
Equation $ii)$ implies $a_1=a_2$. We resolve $i)$ after $a_3$ and substitute it in 
$iii)$:
\[
(b_3-b_2)a_2(a_2-1)= \frac{(a_2-1)(b_3-b_2)}{(b_2-1)}b_2 (b_2-1). 
\]
The factors $(a_2-1)$ and $(b_2-1)$ cancel out and the equation becomes 
\[
(b_3-b_2)(a_2-b_2)=0. 
\]
Since $a_2 \neq b_2$, we conclude $b_2=b_3$. This implies $a_1=a_3$ using $iii)$ again.

\underline{Case $b_1 \neq b_2$.} By $ii)$ we have $a_1 \neq a_2$ and we are allowed to divide $iv)$ and $iii)$ by $ii)$:
\[
\frac{b_2b_3-b_1}{b_2-b_1}= \frac{a_2a_3-a_1}{a_2-a_1} 
 \qquad \makebox{and} \qquad 
\frac{b_3-b_1}{b_2-b_1}= \frac{a_3-a_1}{a_2-a_1}.
\]
Furthermore we can assume that $b_3 \neq b_1$, since $b_3 = b_1$ would imply $b_1=b_2=b_3$ similarly to the  previous case. We rewrite the fractions above as 
\[
b_2b_3(a_2-a_1)-b_1(a_2-a_1)= a_2a_3(b_2-b_1) -a_1(b_2-b_1) 
\]
and 
\[
b_3(a_2-a_1)-b_1(a_2-a_1)= a_3(b_2-b_1) -a_1(b_2-b_1)
\]
and subtract them: 
\begin{equation}\label{st}
(b_2b_3-b_3)(a_2-a_1)=(a_2a_3-a_3)(b_2-b_1) .
\end{equation}
Next we resolve $ii)$ after $(a_2-a_1)$ and substitute in \eqref{st}. This  simplifies to $b_3a_1=a_3b_1$. 
We use this equation to rewrite $iii)$ as
\[
(b_3-b_1)a_1(a_2-b_2)=0. 
\]
Since $a_2 \neq 0$ and $a_2 \neq b_2$, we conclude that $b_1=b_3$, a contradiction. 
\end{proof}

As consequence we get: 

\begin{theorem} \label{MainThm}
There exists a two dimensional family of smooth threefolds $X$ of general type  with 
canonical degree $96$ and 
whose canonical image is a quadric.
\end{theorem}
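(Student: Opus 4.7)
The plan is to combine the bound coming from $K_X^3 = 192$ with Proposition \ref{MainProp}; essentially all the real work has already been done in that proposition, so what remains is an assembly step. First I would record that the preceding theorem shows the canonical image $Y := \varphi_{K_X}(X) \subset \mathbb{P}^4$ is a hypersurface, and that $Y$ is non-degenerate because the five sections $s_0, \ldots, s_4$ form a basis of $H^0(X, K_X)$ by Lemma \ref{Hodge_num}, so they satisfy no non-trivial linear relation. A non-degenerate hypersurface in $\mathbb{P}^4$ has degree at least $2$, and from the identity
\[
\deg(\varphi_{K_X}) \cdot \deg(Y) = K_X^3 = 192
\]
this yields $\deg \varphi_{K_X} \leq 96$, with equality if and only if $Y$ is a quadric.

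Next I would pass to the 2-dimensional subfamily defined by $a_1 = a_2 = a_3 =: a$ and $b_1 = b_2 = b_3 =: b$, with $a, b \in \mathbb{C}\setminus\{0,1\}$ and $a \neq b$. By Proposition \ref{MainProp}, on this locus the sections $s_i$ satisfy a non-trivial relation of the form \eqref{quadric}; equivalently, $Y$ is contained in a non-zero quadric hypersurface $Q \subset \mathbb{P}^4$. Since $Y$ is itself an irreducible hypersurface of the same dimension as $Q$, it must coincide with an irreducible component of $Q$. If $Q$ were reducible --- a product of two distinct hyperplanes, or a double hyperplane --- then $Y$ would lie in a hyperplane, contradicting its non-degeneracy; hence $Q$ is an irreducible quadric and $Y = Q$. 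Therefore $\deg \varphi_{K_X} = 192/2 = 96$, as required.

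The hard part --- namely, producing the quadratic relation and characterizing exactly when it exists --- is precisely the content of Proposition \ref{MainProp} and its case analysis. Granted that, the theorem follows immediately: the parameter space
\[
\{(a,b) \in (\mathbb{C}\setminus\{0,1\})^2 : a \neq b\}
\]
is 2-dimensional, giving the desired family of smooth threefolds of general type with canonical degree $96$ and canonical image a quadric in $\mathbb{P}^4$.
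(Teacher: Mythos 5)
Your proposal is correct and follows exactly the route the paper intends: the paper states Theorem \ref{MainThm} as an immediate consequence of the degree identity $\deg(\varphi_{K_X})\cdot\deg(\varphi_{K_X}(X))=K_X^3=192$, the non-degeneracy of the canonical image, and Proposition \ref{MainProp} applied on the locus $a_1=a_2=a_3$, $b_1=b_2=b_3$. Your extra step ruling out a reducible quadric via non-degeneracy is a sensible piece of bookkeeping that the paper leaves implicit.
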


\bibliographystyle{alpha}

\end{document}